\newtheorem{theorem}{Theorem}[section]
\newtheorem{corollary}[theorem]{Corollary}
\newtheorem{lemma}[theorem]{Lemma}
\theoremstyle{definition}
\numberwithin{equation}{section}
\begin{document}

\title{Curvature properties of 3-quasi-Sasakian manifolds}

\author[B. Cappelletti Montano]{Beniamino Cappelletti Montano}
 \address{Dipartimento di Matematica e Informatica, Universit\`a degli Studi di
 Cagliari, Via Ospedale 72, 09124 Cagliari, Italy}
 \email{b.cappellettimontano@gmail.com}

\author[A. De Nicola]{Antonio De Nicola}
 \address{CMUC, Department of Mathematics, University of Coimbra, 3001-454 Coimbra, Portugal}
 \email{antondenicola@gmail.com}

\author[I. Yudin]{Ivan Yudin}
 \address{CMUC, Department of Mathematics, University of Coimbra, 3001-454 Coimbra, Portugal}
 \email{yudin@mat.uc.pt}

\subjclass[2000]{Primary 53C12, Secondary 53C25, 57R30}

\keywords{quasi-Sasakian, 3-quasi-Sasakian, 3-$\alpha$-Sasakian, 3-Sasakian,
3-cosymplectic}

\thanks{Research partially supported by CMUC and FCT (Portugal), through
European program COMPETE/FEDER, grants PTDC/MAT/099880/2008 and PEst-C/MAT/UI0324/2011,
MTM2009-13383 (A.D.N.), and SFRH/BPD/31788/2006 (I.Y.).}

\begin{abstract}
We find some curvature properties of $3$-quasi-Sasakian manifolds
which are similar to some well-known identities holding in the
Sasakian case. As an application, we prove that any
$3$-quasi-Sasakian manifold of constant horizontal sectional
curvature is necessarily either $3$-$\alpha$-Sasakian or
$3$-cosymplectic.
\end{abstract}

\maketitle

\section{Introduction}
An important topic in contact Riemannian geometry is the study of curvature properties of almost contact metric manifolds (see \cite{blairbook} for
details). In some cases it is in fact possible to characterize a manifold in terms of its curvature tensor field. The typical example is given by Sasakian
manifolds, which are characterized by the well-known condition
\begin{equation}\label{sasaki0}
R_{X Y}\xi = \eta(Y)X - \eta(X)Y.
\end{equation}
A key role in this area is played by the interaction between the
curvature and the structure tensors $(\phi,\xi,\eta)$ of an almost
contact metric manifold. For instance, in any Sasakian manifold one
has
\begin{align}
R(X,Y,Z,W)&=R(X,Y,\phi Z,\phi W) - g(X, Z)g(Y, W)+g(X, W)g(Y, Z) \nonumber \\
&\quad +g(X, \phi Z)g(Y, \phi W)-g(X, \phi W)g(Y, \phi Z)\label{sasaki1}
\end{align}
and in any cosymplectic manifold
\begin{equation}\label{cosymplectic1}
R(X,Y,Z,W)=R(X,Y,\phi Z,\phi W)
\end{equation}
for any vector fields $X,Y,Z,W$. The relations \eqref{sasaki1} and
\eqref{cosymplectic1} turn out to be useful for studying the
$\phi$-sectional curvature and the Ricci tensor and deriving other
properties on the geometry of the manifold. A generalization of
\eqref{sasaki1} and \eqref{cosymplectic1} was proposed by Janssens
and Vanecke in \cite{janssens-vanecke81}. They defined a
\emph{$C(\alpha)$-manifold} as a normal almost contact metric
manifold whose curvature tensor satisfies the condition
\begin{align*}
R(X,Y,Z,W)&=R(X,Y,\phi Z,\phi W) + \alpha\bigl(- g(X, Z)g(Y, W)+g(X, W)g(Y, Z) \nonumber \\
&\quad +g(X, \phi Z)g(Y, \phi W)-g(X, \phi W)g(Y, \phi Z)\bigr),
\end{align*}
for some $\alpha\in{\mathbb{R}}$. $C(\alpha)$-manifolds include
Sasakian, cosymplectic and Kenmotsu manifolds. Another
generalization, due to Blair, is given by the notion of
\emph{quasi-Sasakian structure} (\cite{blair0}). By definition, a
quasi-Sasakian manifold is a normal almost contact metric manifold
whose fundamental $2$-form $\Phi:=g(\cdot,\phi\cdot)$ is closed.
This class includes Sasakian and cosymplectic manifolds and can be
viewed as an odd-dimensional counterpart of K\"{a}hler structures.
Although quasi-Sasakian manifolds were studied by several different
authors and are considered a well-established topic in contact
Riemannian geometry, only little about their curvature properties is
known. With this regard we mention the attempts of Olszak
(\cite{olszak82}) and Rustanov (\cite{rustanov94}). On the other
hand, if a quasi-Sasakian manifold is endowed with two additional
quasi-Sasakian structures defining a $3$-quasi-Sasakian manifold
then, as shown in \cite{agag08} and \cite{ijm09}, the
quaternionic-like relations force the three structures to satisfy
more restrictive geometric conditions.

Motivated by these considerations, in this paper we study the
curvature properties of $3$-quasi-Sasakian manifolds. We are able to
find conditions similar to \eqref{sasaki0}, \eqref{sasaki1}, and
\eqref{cosymplectic1} for the $3$-quasi-Sasakian case. Moreover, we
present one application of these properties by proving a formula
relating the three $\phi$-sectional curvatures of a
$3$-quasi-Sasakian manifold. We then obtain that a $3$-quasi-Sasakian
manifold has constant horizontal sectional curvature if and only if
it is either 3-$c$-Sasakian or 3-cosymplectic. In the first case it
is a space of constant curvature $c^2/4$ and in the latter case it is
flat. The last result extends to the quasi-Sasakian setting a famous
theorem of Konishi (\cite{konishi76}).

\section{Preliminaries}
A quasi-Sasakian manifold $(M,\phi,\xi,\eta,g)$ of dimension $2n+1$
is said to be of rank $2p$ (for some $p\leq n$) if
$\left(d\eta\right)^p\neq 0$ and $\eta\wedge\left(d\eta\right)^p=0$
on $M$, and to be of rank $2p+1$ if
$\eta\wedge\left(d\eta\right)^p\neq 0$ and
$\left(d\eta\right)^{p+1}=0$  on $M$ (cf. \cite{blair0, tanno}).  It
was proven in \cite{blair0} that there are no quasi-Sasakian
manifolds of (constant) even rank.  Particular subclasses of
quasi-Sasakian manifolds are  \emph{$c$-Sasakian manifolds} (usually called $\alpha$-Sasakian), which have rank $2n+1$,  and \emph{cosymplectic manifolds} (rank $1$) according to
satisfy, in addition, $d\eta=c\Phi$ $(c \neq 0)$ and $d\eta=0$,
respectively. For $c=2$ we obtain the well-known Sasakian manifolds.

If on the same manifold $M$ there are given three distinct almost
contact structures  $\left(\phi_1,\xi_1,\eta_1\right)$,
$\left(\phi_2,\xi_2,\eta_2\right)$,
$\left(\phi_3,\xi_3,\eta_3\right)$ satisfying the following
relations, for any even permutation
$\left(\alpha,\beta,\gamma\right)$ of $\left\{1,2,3\right\}$,
\begin{equation} \label{3-structure}
\begin{split}
\phi_\gamma=\phi_{\alpha}\phi_{\beta}-\eta_{\beta}\otimes\xi_{\alpha}=-\phi_{\beta}\phi_{\alpha}+\eta_{\alpha}\otimes\xi_{\beta},\quad\\
\xi_{\gamma}=\phi_{\alpha}\xi_{\beta}=-\phi_{\beta}\xi_{\alpha}, \ \ \eta_{\gamma}= \eta_{\alpha}\circ\phi_{\beta}=-\eta_{\beta}\circ\phi_{\alpha},
\end{split}
\end{equation}
we say that $(\phi_\alpha,\xi_\alpha,\eta_\alpha)$,
$\alpha\in\left\{1,2,3\right\}$, is an almost contact $3$-structure.
Then the dimension of $M$ is necessarily of the form $4n+3$. This
notion was introduced independently by Kuo (\cite{kuo}) and Udriste
(\cite{udriste}). An almost $3$-contact manifold $M$ is said to be
\emph{hyper-normal} if each almost contact structure
$\left(\phi_\alpha,\xi_\alpha,\eta_\alpha\right)$ is normal.

In \cite{kuo} Kuo proved that given an almost contact $3$-structure
$\left(\phi_\alpha,\xi_\alpha,\eta_\alpha\right)$, there exists a
Riemannian metric $g$ compatible with each of the three almost
contact structure and hence we can speak of \emph{almost contact
metric $3$-structure}. It is well known that in any almost
$3$-contact metric manifold the Reeb vector fields
$\xi_1,\xi_2,\xi_3$ are orthonormal with respect to the compatible
metric $g$. Moreover, by putting
${\mathcal{H}}=\bigcap_{\alpha=1}^{3}\ker\left(\eta_\alpha\right)$
we obtain a codimension $3$ distribution on $M$ and the tangent
bundle splits as the orthogonal sum
$TM={\mathcal{H}}\oplus{\mathcal{V}}$, where ${\mathcal
V}=\left\langle\xi_1,\xi_2,\xi_3\right\rangle$. The distributions
$\mathcal H$ and $\mathcal V$ are called, respectively,
\emph{horizontal} and \emph{Reeb distribution}.

A \emph{$3$-quasi-Sasakian structure} is  an almost contact metric
$3$-structure such that each structure
$(\phi_\alpha,\xi_\alpha,\eta_\alpha,g)$ is quasi-Sasakian.
Remarkable subclasses are  $3$-Sasakian and $3$-cosymplectic
manifolds. Another subclass of $3$-quasi-Sasakian structures is
given by almost contact metric $3$-structures
$(\phi_\alpha,\xi_\alpha,\eta_\alpha,g)$ such that each structure
$(\phi_\alpha,\xi_\alpha,\eta_\alpha,g)$ is $c_{\alpha}$-Sasakian.
It is proven in \cite{kashiwada2} that the non-zero constants $c_1$,
$c_2$, $c_3$ must coincide. Therefore we speak of
\emph{$3$-$c$-Sasakian manifolds}. Many results on
$3$-quasi-Sasakian manifolds were obtained in \cite{agag08} and
\cite{ijm09}. We collect some of them in the following theorem.
\begin{theorem}[\cite{agag08, ijm09}]\label{principale}
Let $(M,\phi_\alpha,\xi_\alpha,\eta_\alpha,g)$ be a $3$-quasi-Sasakian manifold of
dimension $4n+3$. Then, for any even permutation
$(\alpha,\beta,\gamma)$ of $\left\{1,2,3\right\}$, the Reeb vector
fields satisfy
\begin{equation}\label{lie}
 [\xi_\alpha,\xi_\beta]=c\xi_\gamma,
\end{equation}
for some $c\in\mathbb{R}$. Moreover, the $1$-forms $\eta_1$,
$\eta_2$, $\eta_3$ have the same rank,  called the \emph{rank} of
the $3$-quasi-Sasakian manifold $M$. The rank of $M$ is $1$ if and
only if $M$ is $3$-cosymplectic and it is an integer of the form
$4l+3$, for some $l\leq n$, in the other cases. Furthermore, any
$3$-quasi-Sasakian manifold of rank $4n+3$ is necessarily
$3$-$c$-Sasakian.
\end{theorem}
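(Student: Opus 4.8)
The plan is to derive the four assertions from the local structure theory of a single quasi-Sasakian manifold, applied to each $(\phi_\alpha,\xi_\alpha,\eta_\alpha,g)$, combined with the quaternionic identities \eqref{3-structure}. Recall that on a quasi-Sasakian manifold $\xi_\alpha$ is Killing, $\nabla_{\xi_\alpha}\xi_\alpha=0$, $\lie_{\xi_\alpha}\phi_\alpha=0$ and $\iota_{\xi_\alpha}d\eta_\alpha=0$, and that there is a skew-symmetric structure operator $f_\alpha$ with $\nabla_X\xi_\alpha=-f_\alpha X$, $f_\alpha\xi_\alpha=0$, $f_\alpha\phi_\alpha=\phi_\alpha f_\alpha$ and $d\eta_\alpha(X,Y)=-2g(f_\alpha X,Y)$; in particular $d\eta_\alpha|_{\mathcal H}$ is $\phi_\alpha$-Hermitian. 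For \eqref{lie} we would first show, using hyper-normality (the Nijenhuis conditions) together with $d\Phi_\alpha=0$, that $\mathcal V$ is $f_\alpha$-invariant -- equivalently that $\nabla_{\xi_\alpha}\xi_\beta\in\mathcal V$ and that $\iota_{\xi_\beta}d\eta_\alpha$ kills $\mathcal H$ -- so that $[\xi_\alpha,\xi_\beta]=\nabla_{\xi_\alpha}\xi_\beta-\nabla_{\xi_\beta}\xi_\alpha$ is a section of $\mathcal V$. Pairing with $\xi_\alpha$ and $\xi_\beta$ and using skew-symmetry of $\nabla\xi_\delta$ together with $g(\xi_\delta,\xi_\epsilon)=\delta_{\delta\epsilon}$ removes the $\xi_\alpha$- and $\xi_\beta$-components, so $[\xi_\alpha,\xi_\beta]=f\,\xi_\gamma$; running over the even permutations and using $\xi_\gamma=\phi_\alpha\xi_\beta$ with $\lie_{\xi_\alpha}\phi_\alpha=0$ shows the three functions coincide, to a common value $c$. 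Finally, the Jacobi identity for $\xi_1,\xi_2,\xi_3$ gives $\xi_\alpha(c)=0$, and differentiating $[\xi_\alpha,\xi_\beta]=c\,\xi_\gamma$ in a horizontal direction and using $d(d\eta_\alpha)=0$ shows $dc$ also annihilates $\mathcal H$; connectedness of $M$ then gives $c\in\mathbb{R}$.

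For the rank statements, note that \eqref{lie} gives $d\eta_\alpha(\xi_\beta,\xi_\gamma)=-c$ while $\iota_{\xi_\alpha}d\eta_\alpha=0$ and $\iota_{\xi_\beta}d\eta_\alpha|_{\mathcal H}=0$ show $d\eta_\alpha$ pairs $\mathcal H$ with $\mathcal V$ trivially; hence the rank of $\eta_\alpha$ is entirely governed by $d\eta_\alpha|_{\mathcal H}$ and the scalar $c$. The crux is to prove that the distribution $\mathcal E:=\{X\in\mathcal H:\iota_X d\eta_\alpha=0\}$ does not depend on $\alpha$, is invariant under all three $\phi_\alpha$ -- hence a quaternionic subdistribution of $\mathcal H$, of dimension $4(n-l)$ for some $0\le l\le n$ -- and that on the orthogonal complement $\mathcal H\ominus\mathcal E$ one has $d\eta_\alpha=c\,\Phi_\alpha$, with the \emph{same} constant $c$ as in \eqref{lie}. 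Granting this, the rank of the $2$-form $d\eta_\alpha$ is $4l+2$ when $c\neq0$ and $4l$ when $c=0$, so the rank of $\eta_\alpha$ is $4l+3$ when $c\neq0$; and when $c=0$ the identity forces $d\eta_\alpha\equiv0$ on $\mathcal H\ominus\mathcal E$, so $\mathcal E=\mathcal H$, whence $d\eta_\alpha\equiv0$ for every $\alpha$, which together with hyper-normality is precisely the definition of a $3$-cosymplectic manifold and gives rank $1$. (Blair's theorem that quasi-Sasakian manifolds have no even rank is consistent with, and subsumed by, this analysis.) In all cases $\eta_1,\eta_2,\eta_3$ share the rank $4l+3$, or they all have rank $1$ and $M$ is $3$-cosymplectic.

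When the rank is the maximal value $4n+3$ we have $\mathcal E=\{0\}$ and $d\eta_\alpha=c\,\Phi_\alpha$ on all of $\mathcal H$ with $c\neq0$; evaluating this on $(\xi_\beta,\xi_\gamma)$ and comparing with $d\eta_\alpha(\xi_\beta,\xi_\gamma)=-c$ pins $c_\alpha=c$ for each $\alpha$, so every structure is $c$-Sasakian and $M$ is $3$-$c$-Sasakian. We expect the genuine obstacle to be the pair of statements ``$\mathcal E$ is quaternionic'' and ``$d\eta_\alpha=c\,\Phi_\alpha$ on $\mathcal H\ominus\mathcal E$'', which promote the rank from the generic odd value $2p+1$ to $4l+3$: for a single quasi-Sasakian structure $d\eta|_{\mathcal H}$ restricted to its nondegenerate part is merely some nondegenerate $(1,1)$-form, and it is only the rigidity imposed by three mutually compatible structures -- via the relations among the $\phi_\alpha$, $f_\alpha$ and $d\eta_\alpha$ -- that forces proportionality to $\Phi_\alpha$ with a universal constant; one must also control the (a priori singular) locus where the rank of $d\eta_\alpha$ could jump, so that the rank is a well-defined global integer. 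The constancy of $c$ in \eqref{lie} is the second delicate point, since a priori it is only a smooth function.
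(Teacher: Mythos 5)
Note first that the paper does not prove this statement at all: Theorem \ref{principale} is imported verbatim from \cite{agag08} and \cite{ijm09}, so the benchmark is the proofs in those works, whose overall architecture (closure of the Reeb distribution under brackets, constancy of $c$, and then the structure of $d\eta_\alpha$ via the splitting $\mathcal{H}=\mathcal{E}^{4l}\oplus\mathcal{E}^{4m}$) your outline does mirror. The problem is that what you present is a roadmap, not a proof: every genuinely hard step is announced (``we would first show'', ``the crux is to prove'', ``granting this'') rather than carried out, and those steps are precisely the content of the theorem. Concretely: (i) the claim that $\mathcal{V}$ is invariant under $\nabla\xi_\alpha$, equivalently that $i_{\xi_\beta}d\eta_\alpha$ vanishes on $\mathcal{H}$, so that $[\xi_\alpha,\xi_\beta]\in\Gamma(\mathcal{V})$ --- this is a substantive lemma of \cite{agag08,ijm09} requiring the interplay of the three Nijenhuis conditions; for a single quasi-Sasakian structure nothing forces $\nabla_{\xi_\beta}\xi_\alpha$ to be vertical, so it cannot be dismissed as a consequence of the recalled one-structure identities. (ii) The horizontal constancy of $c$: your appeal to $d(d\eta_\gamma)=0$ evaluated on $(X,\xi_\alpha,\xi_\beta)$ produces extra terms $d\eta_\gamma(X,\xi_\beta)$, $d\eta_\gamma([X,\xi_\alpha],\xi_\beta)$, etc., whose vanishing again rests on the unproven lemma (i); as you note yourself, a priori $c$ is only a function. (iii) The statements that $\mathcal{E}=\{X\in\mathcal{H}: i_Xd\eta_\alpha=0\}$ is independent of $\alpha$, is $\phi_\alpha$-invariant for all $\alpha$ (hence of dimension $4m$), and that $d\eta_\alpha=c\,\Phi_\alpha$ on its orthogonal complement \emph{with the same constant} $c$ as in \eqref{lie}: this is exactly $d\eta_\alpha=c\,\Psi_\alpha$, i.e.\ formula \eqref{differential1} of the present paper, which is Theorem 4.3/(4.8) of \cite{ijm09} and is the deepest ingredient. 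Everything you derive from it (same rank for the three $\eta_\alpha$, rank $4l+3$ or $1$, the $3$-cosymplectic and $3$-$c$-Sasakian dichotomy, global constancy of the rank) is correct but conditional on it.

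So the verdict is: correct skeleton, with the pairing/Jacobi-identity arguments and the rank bookkeeping (rank of $d\eta_\alpha$ equal to $4l+2$ for $c\neq 0$, hence rank $4l+3$ for $\eta_\alpha$; $c=0$ forcing $d\eta_\alpha=0$ and the $3$-cosymplectic case) done properly, but with a genuine gap where the theorem actually lives. You have in effect reduced the statement to equivalent intermediate claims --- vertical closure of the Reeb brackets, constancy of $c$, and $d\eta_\alpha=c\Psi_\alpha$ with a quaternionic kernel distribution of constant rank --- and then stopped, flagging them as ``the genuine obstacle.'' Identifying the obstacle is not overcoming it; to complete the argument you would need to reproduce (or cite) the computations of \cite{agag08,ijm09} establishing those claims, including the fact that the rank cannot jump from point to point. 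A smaller caveat: some of the one-structure facts you ``recall'' (e.g.\ that $\nabla\xi_\alpha$ commutes with $\phi_\alpha$, and the $\phi_\alpha$-Hermitian character of $d\eta_\alpha$ on $\mathcal{H}$) are true for quasi-Sasakian structures but are themselves nontrivial consequences of normality plus $d\Phi_\alpha=0$ and should be referenced, not asserted.
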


We point out that the constant $c$ in \eqref{lie} is zero if and
only if the manifold is $3$-cosymplectic. Moreover, for any
$3$-quasi-Sasakian manifold of rank $4l+3$ one can consider the
distribution
\begin{equation*}
{\mathcal E}^{4m}:=\left\{X\in{\mathcal H} | i_{X}\eta_{\alpha}=0, i_{X}d\eta_{\alpha}=0 \ \hbox{for any $\alpha=1,2,3$} \right\} \ \  \ (l+m=n)
\end{equation*}
and its orthogonal complement ${\mathcal E}^{4l+3}:=({\mathcal E}^{4m})^{\perp}$. We will also consider the distribution ${\mathcal E}^{4l}$ which is the
orthogonal complement of $\mathcal V$ in ${\mathcal E}^{4l+3}$. A remarkable property of $3$-quasi-Sasakian manifolds, which in general does not hold for a
single quasi-Sasakian structure, is that both ${\mathcal E}^{4l+3}$ and ${\mathcal E}^{4m}$ are integrable and define Riemannian foliations with totally
geodesic leaves. In particular it follows that $\nabla{\mathcal E}^{4l+3}\subset{\mathcal E}^{4l+3}$ and $\nabla{\mathcal E}^{4m}\subset{\mathcal E}^{4m}$.

\smallskip

{\small All manifolds considered in the paper are assumed to be connected. The Spivak's conventions for the differential, the wedge
product and the interior product are adopted.}

\section{Main results}
We recall that in any 3-quasi-Sasakian manifold  of rank $4l+3$  for
each $\alpha\in\left\{1,2,3\right\}$ one defines two  tensors
$\psi_\alpha$ and $\theta_\alpha$ by
\begin{equation*}
\psi_\alpha:=\left\{
               \begin{array}{ll}
                 \phi_\alpha, & \hbox{on ${\mathcal E}^{4l+3}$} \\
                 0, & \hbox{on ${\mathcal E}^{4m}$}
               \end{array}
             \right. \ \ \
\theta_\alpha:=\left\{
               \begin{array}{ll}
                 0, & \hbox{on ${\mathcal E}^{4l+3}$} \\
                 \phi_\alpha, & \hbox{on ${\mathcal E}^{4m}$.}
               \end{array}
             \right.
\end{equation*}
Moreover we define $\Psi_\alpha(X,Y):=g(X,\psi_\alpha Y)$ and $\Theta_\alpha(X,Y):=g(X,\theta_\alpha Y)$ for all $X,Y\in\Gamma(TM)$. The tensors
$\psi_\alpha$ and $\Psi_\alpha$ satisfy
\begin{equation}\label{differential1}
d\eta_\alpha = c \Psi_\alpha,   \qquad  \nabla\xi_\alpha=-\frac{c}{2}\psi_\alpha
\end{equation}
 (cf. \cite[(4.8)]{ijm09} and \cite[Theorem 4.3]{ijm09}). Since  $\phi_\alpha=\psi_\alpha + \theta_\alpha$ one has that $\Phi_\alpha=\Psi_\alpha + \Theta_\alpha$. Consequently, due
to \eqref{differential1}, $\Psi_\alpha$ and $\Theta_\alpha$ are closed $2$-forms. We start with a few  lemmas. The first is immediate.

\begin{lemma}
In any $3$-quasi-Sasakian manifold  of rank $4l+3$  one has,
\begin{gather}\label{simmetry}
g(\psi_\alpha^2 X,Y)=g(X,\psi_\alpha^2Y),\\
\psi_{\alpha}^{3}=-\psi_{\alpha}\label{psicube},\\
\nabla\eta_\alpha=\frac{c}{2}\Psi_\alpha.
\end{gather}
\end{lemma}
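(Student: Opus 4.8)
The three identities in the lemma are of very different natures, so the plan is to treat them in turn, exploiting the decomposition $\phi_\alpha=\psi_\alpha+\theta_\alpha$ together with the defining property that $\psi_\alpha$ vanishes on $\mathcal{E}^{4m}$ and equals $\phi_\alpha$ on $\mathcal{E}^{4l+3}$, while $\theta_\alpha$ does the opposite. First I would record the elementary fact that $\mathcal{E}^{4l+3}$ and $\mathcal{E}^{4m}$ are $\phi_\alpha$-invariant (this follows from the definitions of $\mathcal{E}^{4m}$ as the common kernel of the $\eta_\alpha$ and $d\eta_\alpha$, since $\phi_\alpha$ preserves the kernels of the $\eta$'s and commutes appropriately with $d\eta_\alpha$ via the compatibility $\Phi_\alpha=g(\cdot,\phi_\alpha\cdot)$; this is presumably established in the cited references \cite{agag08,ijm09}). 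Granting this, both $\psi_\alpha$ and $\theta_\alpha$ are genuine $(1,1)$-tensors whose squares are controlled by $\phi_\alpha^2=-\mathrm{Id}+\eta_\alpha\otimes\xi_\alpha$.

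For \eqref{simmetry}, the point is that $\Psi_\alpha$ is a $2$-form, hence skew, so $g(\psi_\alpha X,Y)=-g(X,\psi_\alpha Y)$; applying this twice gives $g(\psi_\alpha^2 X,Y)=-g(\psi_\alpha X,\psi_\alpha Y)=g(X,\psi_\alpha^2 Y)$, which is exactly the claimed symmetry. For \eqref{psicube}, I would argue blockwise: on $\mathcal{E}^{4m}$ both sides vanish since $\psi_\alpha=0$ there; on $\mathcal{E}^{4l+3}$ we have $\psi_\alpha=\phi_\alpha$, and since $\mathcal{E}^{4l+3}$ contains $\mathcal{V}$ and is $\phi_\alpha$-invariant, $\psi_\alpha^3=\phi_\alpha^3=\phi_\alpha(\phi_\alpha^2)=\phi_\alpha(-\mathrm{Id}+\eta_\alpha\otimes\xi_\alpha)=-\phi_\alpha+\eta_\alpha\otimes\phi_\alpha\xi_\alpha=-\phi_\alpha=-\psi_\alpha$, using $\phi_\alpha\xi_\alpha=0$. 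Since $TM=\mathcal{E}^{4m}\oplus\mathcal{E}^{4l+3}$ and $\psi_\alpha$ preserves this splitting, the identity holds globally.

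The third identity, $\nabla\eta_\alpha=\tfrac{c}{2}\Psi_\alpha$, I would derive from the second half of \eqref{differential1}, namely $\nabla\xi_\alpha=-\tfrac{c}{2}\psi_\alpha$. Since $g$ is parallel and $\eta_\alpha(X)=g(X,\xi_\alpha)$, for any $X,Y$ one has $(\nabla_X\eta_\alpha)(Y)=X(g(Y,\xi_\alpha))-\eta_\alpha(\nabla_X Y)=g(\nabla_X Y,\xi_\alpha)+g(Y,\nabla_X\xi_\alpha)-g(\nabla_X Y,\xi_\alpha)=g(Y,\nabla_X\xi_\alpha)=-\tfrac{c}{2}g(Y,\psi_\alpha X)=\tfrac{c}{2}g(\psi_\alpha Y,X)=\tfrac{c}{2}\Psi_\alpha(X,Y)$, where the penultimate step uses the skew-symmetry of $\Psi_\alpha$ established above. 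Summing up: none of the three parts presents a real obstacle — as the authors say, the lemma is immediate — but the one point that genuinely needs the structure theory rather than pure formal manipulation is the $\phi_\alpha$-invariance of the distributions $\mathcal{E}^{4l+3}$ and $\mathcal{E}^{4m}$, which is what makes the blockwise computation in \eqref{psicube} legitimate; everything else is a two-line consequence of skew-symmetry of $2$-forms, parallelism of $g$, and the identities already recorded in \eqref{differential1}.
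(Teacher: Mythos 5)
Your proof is correct, and since the paper offers no argument at all (the lemma is declared ``immediate''), your verification --- skew-symmetry of $\Psi_\alpha$, the blockwise computation of $\psi_\alpha^3$ using the $\phi_\alpha$-invariance of $\mathcal{E}^{4l+3}$ and $\mathcal{E}^{4m}$ (which is indeed established in \cite{ijm09}), and metric compatibility applied to $\eta_\alpha=g(\cdot,\xi_\alpha)$ together with \eqref{differential1} --- is exactly the intended routine argument. The only remark worth adding is that the skewness of $\Psi_\alpha$ can be read off even more directly from the first identity in \eqref{differential1}, since $\Psi_\alpha=\frac{1}{c}\,d\eta_\alpha$ is a $2$-form when the rank is $4l+3$ (so $c\neq 0$).
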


\begin{lemma}
In any 3-quasi-Sasakian manifold  of rank $4l+3$  one has
\begin{equation}\label{nablapsi}
(\nabla_{X}\psi_\alpha)Y=\frac{c}{2}\left(\eta_\alpha(Y)\psi_\alpha^2 X - g(\psi_\alpha^2 X,Y)\xi_\alpha\right).
\end{equation}
\end{lemma}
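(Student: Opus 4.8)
The plan is to compute $(\nabla_X\psi_\alpha)Y$ directly from the definition of the covariant derivative of a $(1,1)$-tensor, namely $(\nabla_X\psi_\alpha)Y = \nabla_X(\psi_\alpha Y) - \psi_\alpha(\nabla_X Y)$, and to reduce everything to the already-known facts $\nabla\xi_\alpha = -\tfrac{c}{2}\psi_\alpha$ and $d\eta_\alpha = c\Psi_\alpha$ from \eqref{differential1}, together with $\nabla\eta_\alpha = \tfrac{c}{2}\Psi_\alpha$ and $\psi_\alpha^3 = -\psi_\alpha$ from the previous lemma. First I would exploit the structural fact recalled in the Preliminaries that $\nabla\mathcal{E}^{4l+3}\subset\mathcal{E}^{4l+3}$ and $\nabla\mathcal{E}^{4m}\subset\mathcal{E}^{4m}$: since $\psi_\alpha = \phi_\alpha$ on $\mathcal{E}^{4l+3}$ and $\psi_\alpha = 0$ on $\mathcal{E}^{4m}$, both tensors $\psi_\alpha$ and the two distributions are parallel-compatible, so it suffices to verify the identity separately for $Y\in\Gamma(\mathcal{E}^{4m})$ (where both sides should vanish) and for $Y\in\Gamma(\mathcal{E}^{4l+3})$, and on the latter one may further split into $Y=\xi_\alpha$, $Y=\xi_\beta$ or $\xi_\gamma$, and $Y\in\Gamma(\mathcal{E}^{4l})$.

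For $Y\in\Gamma(\mathcal{E}^{4m})$ one has $\psi_\alpha Y=0$ and $\nabla_X Y\in\mathcal{E}^{4m}$, so the left side is $0$; on the right, $\eta_\alpha(Y)=0$ and $\psi_\alpha^2 X$ has no component along $\xi_\alpha$ while, splitting $X$, the term $g(\psi_\alpha^2 X,Y)$ involves only the $\mathcal{E}^{4l+3}$-part of $X$ paired with $Y\in\mathcal{E}^{4m}$, hence vanishes by orthogonality — so both sides are $0$. For $Y=\xi_\alpha$ the left side is $\nabla_X(\psi_\alpha\xi_\alpha) - \psi_\alpha(\nabla_X\xi_\alpha) = -\psi_\alpha(-\tfrac{c}{2}\psi_\alpha X) = \tfrac{c}{2}\psi_\alpha^2 X$, and the right side is $\tfrac{c}{2}(\psi_\alpha^2 X - g(\psi_\alpha^2 X,\xi_\alpha)\xi_\alpha) = \tfrac{c}{2}\psi_\alpha^2 X$ since $\psi_\alpha^2 X\perp\xi_\alpha$; they agree. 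The remaining work is the case $Y\in\Gamma(\mathcal{E}^{4l})$ (and $Y=\xi_\beta,\xi_\gamma$, which can be folded in since on $\mathcal E^{4l+3}$ the identity $\eta_\alpha(Y)\psi_\alpha^2 X - g(\psi_\alpha^2 X,Y)\xi_\alpha$ must encode the full $\nabla\psi_\alpha$): here one knows $\psi_\alpha = \phi_\alpha$, so $(\nabla_X\psi_\alpha)Y = (\nabla_X\phi_\alpha)Y$ modulo correction terms coming from the fact that $\nabla_X Y$ may acquire a $\mathcal V$-component. One then invokes the quasi-Sasakian normality of $\phi_\alpha$, its closed fundamental form $\Phi_\alpha = \Psi_\alpha$ on this subbundle, and the standard identity expressing $\nabla\phi$ of a normal almost contact metric structure in terms of $d\Phi$, $N$, and $\nabla\eta$; with $d\Psi_\alpha = 0$ and the formulas for $\nabla\eta_\alpha$, $\nabla\xi_\alpha$ the right-hand side of \eqref{nablapsi} drops out after collecting terms.

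The main obstacle I anticipate is precisely the last case: keeping careful track of the $\mathcal V$-components, since on $\mathcal E^{4l}$ one has $g(\nabla_X Y,\xi_\alpha) = -g(Y,\nabla_X\xi_\alpha) = \tfrac{c}{2}g(Y,\psi_\alpha X) = \tfrac{c}{2}\Psi_\alpha(Y,X)$, so $\nabla_X Y$ leaks out of $\mathcal E^{4l}$ and this leakage is exactly what produces the $-\tfrac{c}{2}g(\psi_\alpha^2 X,Y)\xi_\alpha$ term on the right. Dually, $\psi_\alpha(\nabla_X Y)$ sees only the $\mathcal E^{4l+3}$-part. Matching these contributions against the known expression for $(\nabla_X\phi_\alpha)Y$ on a quasi-Sasakian factor — which, since $\Phi_\alpha$ restricted here equals $\Psi_\alpha$ and is closed and the structure is normal, forces the almost contact metric structure on $\mathcal E^{4l+3}$ to be essentially $c$-Sasakian-like with $\nabla_X\xi_\alpha = -\tfrac{c}{2}\phi_\alpha X$ — is a bookkeeping computation, but the key input that makes it work is \eqref{differential1} together with $\psi_\alpha^3=-\psi_\alpha$, which lets one write $g(\psi_\alpha X,\psi_\alpha Y) = g(X,Y) - $ (terms along $\mathcal V$ and $\mathcal E^{4m}$). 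Once the three pieces (the $\xi_\alpha$ direction, the $\mathcal E^{4l}$ direction with its $\mathcal V$-leakage, and the vanishing $\mathcal E^{4m}$ direction) are assembled, the identity \eqref{nablapsi} follows.
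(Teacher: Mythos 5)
Your overall skeleton coincides with the paper's: split $TM$ into the parallel distributions $\mathcal{E}^{4l+3}$ and $\mathcal{E}^{4m}$, check that both sides of \eqref{nablapsi} vanish on $\mathcal{E}^{4m}$, and on $\mathcal{E}^{4l+3}$ reduce $\nabla\psi_\alpha$ to $\nabla\phi_\alpha$. Your $\mathcal{E}^{4m}$ case and the $Y=\xi_\alpha$ computation are correct. However, the heart of the lemma --- the case $Y\in\Gamma(\mathcal{E}^{4l})$ (together with $\xi_\beta,\xi_\gamma$) --- is not actually proved: you assert that the right-hand side ``drops out after collecting terms'' from the standard identity expressing $2g((\nabla_X\phi)Y,Z)$ through $d\Phi$, the Nijenhuis tensor and $d\eta$, but you never carry out that computation, and it is exactly there that the specific expression $\frac{c}{2}\left(\eta_\alpha(Y)\psi_\alpha^2X-g(\psi_\alpha^2X,Y)\xi_\alpha\right)$ has to be produced. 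The paper closes precisely this step by quoting the previously established formula for $(\nabla_X\phi_\alpha)Y$ from \cite[(4.9)]{ijm09}; your route (normality, $d\Phi_\alpha=0$, $d\eta_\alpha=c\Psi_\alpha$ fed into the identity from \cite{blairbook}) would indeed re-derive it, but as written it is a plan rather than a proof, and the constants depend on the convention adopted for $d$, so the cancellation has to be exhibited, not announced. This missing computation is the genuine gap.

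A second, smaller point: the ``correction terms coming from the fact that $\nabla_XY$ may acquire a $\mathcal V$-component'' do not exist. Since $\mathcal V\subset\mathcal{E}^{4l+3}$, $\psi_\alpha=\phi_\alpha$ on all of $\mathcal{E}^{4l+3}$, and $\nabla\mathcal{E}^{4l+3}\subset\mathcal{E}^{4l+3}$, for every $Y\in\Gamma(\mathcal{E}^{4l+3})$ one has exactly $(\nabla_X\psi_\alpha)Y=\nabla_X(\phi_\alpha Y)-\phi_\alpha\nabla_XY=(\nabla_X\phi_\alpha)Y$, with no leakage to account for; this single observation makes your further splitting into the $\xi_\alpha$, $\xi_\beta,\xi_\gamma$ and $\mathcal{E}^{4l}$ directions unnecessary, and it is in fact the whole of the paper's first case. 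So the repair is straightforward: either cite (or prove once, via your Blair-identity argument written out in full) the formula for $\nabla\phi_\alpha$, and then apply the parallelism reduction uniformly on $\mathcal{E}^{4l+3}$, as the paper does.
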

\begin{proof}Let $X\in\Gamma(TM)$.
According to the orthogonal decomposition $TM={\mathcal E}^{4l+3}\oplus{\mathcal E}^{4m}$  we may distinguish the following two cases. (i) Assume
$Y\in\Gamma({\mathcal E}^{4l+3})$. Then, since ${\nabla \mathcal E}^{4l+3} \subset {\mathcal E}^{4l+3}$, we have $(\nabla_{X}\psi_\alpha)Y =
\nabla_{X}(\psi_{\alpha}Y) - \psi_\alpha \nabla_{X}Y = \nabla_{X}(\phi_{\alpha}Y) - \phi_\alpha \nabla_{X}Y = (\nabla_{X}\phi_\alpha)Y$. The assertion then
follows from \cite[(4.9)]{ijm09}. (ii) If $Y\in\Gamma({\mathcal E}^{4m})$, then, as ${\nabla \mathcal E}^{4m} \subset {\mathcal E}^{4m}$, one has
$(\nabla_{X}\psi_\alpha)Y=\nabla_{X}(\psi_\alpha Y) - \psi_\alpha \nabla_{X}Y =0$. On the other hand, by using
\eqref{simmetry} and $Y \in \Gamma\left( \mathcal{E}^{4m} \right) \subset
\ker\left( \eta_\alpha \right)\cap \ker\left( \psi_\alpha \right)$, one has
\begin{align*}
\frac{c}{2}\left(\eta_{\alpha}(Y)\psi_\alpha^2 X - g(\psi_\alpha^2
X,Y)\xi_\alpha\right) = - \frac{c}2 g\left( X, \psi_\alpha^2 Y \right)\xi_{\alpha} =0.
\end{align*}
\end{proof}

By using \eqref{nablapsi} and \eqref{psicube} we get straightforwardly the following formula for $\nabla\psi_{\alpha}^2$.
\begin{lemma}
In any 3-quasi-Sasakian manifold   of rank $4l+3$  one has
\begin{equation}\label{nablaphisquare}
(\nabla_{X}\psi_{\alpha}^{2})Y=\frac{c}{2}\left(\Psi_{\alpha}(X,Y)\xi_\alpha-\eta_{\alpha}(Y)\psi_{\alpha}X\right).
\end{equation}
\end{lemma}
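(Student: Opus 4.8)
The plan is to differentiate the composition $\psi_\alpha^2=\psi_\alpha\circ\psi_\alpha$ by the Leibniz rule for covariant derivatives of endomorphism fields, obtaining
\[
(\nabla_X\psi_\alpha^2)Y=(\nabla_X\psi_\alpha)(\psi_\alpha Y)+\psi_\alpha\bigl((\nabla_X\psi_\alpha)Y\bigr),
\]
and then to substitute the already-established formula \eqref{nablapsi} into each of the two terms and simplify. The only structural facts needed are $\psi_\alpha^3=-\psi_\alpha$ from \eqref{psicube}, the identities $\psi_\alpha\xi_\alpha=0$ and $\eta_\alpha\circ\psi_\alpha=0$ (the former because $\phi_\alpha\xi_\alpha=0$ and $\mathcal V\subset\mathcal E^{4l+3}$, the latter then following from $\eta_\alpha=g(\xi_\alpha,\cdot\,)$), and the skew-adjointness of $\psi_\alpha$, which holds because $\Psi_\alpha=g(\cdot,\psi_\alpha\cdot)$ is a $2$-form.

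Carrying this out: for the second term, applying $\psi_\alpha$ to $\tfrac{c}{2}\bigl(\eta_\alpha(Y)\psi_\alpha^2X-g(\psi_\alpha^2X,Y)\xi_\alpha\bigr)$ and using $\psi_\alpha^3=-\psi_\alpha$ and $\psi_\alpha\xi_\alpha=0$ gives $-\tfrac{c}{2}\eta_\alpha(Y)\psi_\alpha X$. For the first term, \eqref{nablapsi} evaluated at $\psi_\alpha Y$ gives $\tfrac{c}{2}\bigl(\eta_\alpha(\psi_\alpha Y)\psi_\alpha^2X-g(\psi_\alpha^2X,\psi_\alpha Y)\xi_\alpha\bigr)$; here $\eta_\alpha(\psi_\alpha Y)=0$, and by skew-adjointness together with \eqref{psicube} one computes $g(\psi_\alpha^2X,\psi_\alpha Y)=-g(\psi_\alpha^3X,Y)=g(\psi_\alpha X,Y)=-\Psi_\alpha(X,Y)$, so the first term equals $\tfrac{c}{2}\Psi_\alpha(X,Y)\xi_\alpha$. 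Adding the two contributions yields exactly \eqref{nablaphisquare}.

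Since the statement is purely formal, there is no genuine obstacle here — the paper rightly calls it straightforward — and the computation is entirely driven by \eqref{nablapsi}, \eqref{psicube}, and the elementary identities above. The one place requiring a moment's care is the sign bookkeeping in $g(\psi_\alpha^2X,\psi_\alpha Y)=-\Psi_\alpha(X,Y)$; alternatively one could bypass it by splitting $Y$ according to $TM=\mathcal E^{4l+3}\oplus\mathcal E^{4m}$ exactly as in the proof of \eqref{nablapsi}, checking each summand separately, but the Leibniz-rule computation is shorter.
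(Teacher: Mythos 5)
Your proof is correct and follows exactly the route the paper indicates (the paper gives no details, only that the formula follows "straightforwardly" from \eqref{nablapsi} and \eqref{psicube}): the Leibniz expansion of $\nabla_X\psi_\alpha^2$, the vanishing of $\eta_\alpha\circ\psi_\alpha$ and $\psi_\alpha\xi_\alpha$, and the sign computation $g(\psi_\alpha^2X,\psi_\alpha Y)=-\Psi_\alpha(X,Y)$ are all handled properly. Nothing to add.
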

\begin{theorem}\label{curvature5}
In any 3-quasi-Sasakian manifold the following formula holds
\begin{equation*}
R_{XY}\xi_\alpha=\frac{c^2}{4}\left(\eta_{\alpha}(X)\psi_{\alpha}^{2}Y - \eta_{\alpha}(Y)\psi_{\alpha}^{2}X\right).
\end{equation*}
\end{theorem}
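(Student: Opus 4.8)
The plan is to compute $R_{XY}\xi_\alpha = \nabla_X\nabla_Y\xi_\alpha - \nabla_Y\nabla_X\xi_\alpha - \nabla_{[X,Y]}\xi_\alpha$ directly from the formula $\nabla\xi_\alpha = -\frac{c}{2}\psi_\alpha$ in \eqref{differential1}. Substituting this, we get
\begin{equation*}
R_{XY}\xi_\alpha = -\frac{c}{2}\bigl(\nabla_X(\psi_\alpha Y) - \nabla_Y(\psi_\alpha X) - \psi_\alpha[X,Y]\bigr).
\end{equation*}
The point is to rewrite each term $\nabla_X(\psi_\alpha Y)$ as $(\nabla_X\psi_\alpha)Y + \psi_\alpha\nabla_X Y$ so that, after using $[X,Y] = \nabla_X Y - \nabla_Y X$, the terms $\psi_\alpha\nabla_X Y$ and $\psi_\alpha\nabla_Y X$ cancel against $\psi_\alpha[X,Y]$. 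This leaves
\begin{equation*}
R_{XY}\xi_\alpha = -\frac{c}{2}\bigl((\nabla_X\psi_\alpha)Y - (\nabla_Y\psi_\alpha)X\bigr),
\end{equation*}
which is the clean identity we want to feed into Lemma with formula \eqref{nablapsi}.

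Next I would substitute \eqref{nablapsi} for each of the two covariant derivatives. We obtain
\begin{equation*}
R_{XY}\xi_\alpha = -\frac{c^2}{4}\Bigl(\eta_\alpha(Y)\psi_\alpha^2 X - g(\psi_\alpha^2 X, Y)\xi_\alpha - \eta_\alpha(X)\psi_\alpha^2 Y + g(\psi_\alpha^2 Y, X)\xi_\alpha\Bigr).
\end{equation*}
The two terms involving $\xi_\alpha$ cancel by the symmetry \eqref{simmetry}, namely $g(\psi_\alpha^2 X, Y) = g(\psi_\alpha^2 Y, X)$. What remains is
\begin{equation*}
R_{XY}\xi_\alpha = -\frac{c^2}{4}\bigl(\eta_\alpha(Y)\psi_\alpha^2 X - \eta_\alpha(X)\psi_\alpha^2 Y\bigr) = \frac{c^2}{4}\bigl(\eta_\alpha(X)\psi_\alpha^2 Y - \eta_\alpha(Y)\psi_\alpha^2 X\bigr),
\end{equation*}
which is precisely the claimed formula.

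One subtlety to address: \eqref{nablapsi} and \eqref{simmetry} are stated for 3-quasi-Sasakian manifolds of rank $4l+3$, whereas Theorem~\ref{curvature5} is stated for any 3-quasi-Sasakian manifold. So I would first dispose of the $3$-cosymplectic case separately — there $c = 0$ (as noted after Theorem~\ref{principale}), each $\xi_\alpha$ is parallel, hence $R_{XY}\xi_\alpha = 0$, which agrees with the right-hand side since $c^2/4 = 0$. For the remaining case the rank is $4l+3$ by Theorem~\ref{principale} and the preceding lemmas apply verbatim. I do not expect any genuine obstacle here: the computation is a short formal manipulation of the Lie-derivative-free curvature expression, and the only thing to be careful about is the bookkeeping of signs and the correct invocation of \eqref{simmetry} to kill the $\xi_\alpha$-terms.
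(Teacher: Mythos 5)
Your proposal is correct and follows essentially the same route as the paper: expand $R_{XY}\xi_\alpha$ using $\nabla\xi_\alpha=-\frac{c}{2}\psi_\alpha$, rewrite via $(\nabla_X\psi_\alpha)Y-(\nabla_Y\psi_\alpha)X$, substitute \eqref{nablapsi}, and cancel the $\xi_\alpha$-terms by \eqref{simmetry}, after first disposing of the $3$-cosymplectic case where each $\xi_\alpha$ is parallel. The signs and the case split on the rank match the paper's argument exactly.
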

\begin{proof}
If the manifold is 3-cosymplectic, i.e. $c=0$, the claim follows
easily from the property that each $\xi_\alpha$ is parallel. Thus we
can assume that $M$ has rank $4l+3$. By using \eqref{differential1}, \eqref{nablapsi}, and \eqref{simmetry}, we have
\begin{align*}
R_{XY}\xi_\alpha &= \frac{c}{2}\left(\nabla_{Y}(\psi_{\alpha}X) -\nabla_{X}(\psi_{\alpha}Y) + \psi_{\alpha}[X,Y]\right)\\
&=\frac{c}{2}\left((\nabla_{Y}\psi_{\alpha})X - (\nabla_{X}\psi_{\alpha})Y\right)\\
&=\frac{c^2}{4}\left(\eta_{\alpha}(X)\psi_{\alpha}^{2}Y - g(\psi_{\alpha}^{2}Y,X)\xi_{\alpha} - \eta_{\alpha}(Y)\psi_{\alpha}^{2}X + g(\psi_{\alpha}^{2}X,Y)\xi_{\alpha}\right)\\
&=\frac{c^{2}}{4}\left(\eta_{\alpha}(X)\psi_{\alpha}^{2}Y-\eta_{\alpha}(Y)\psi_{\alpha}^{2}X\right).
\end{align*}
\end{proof}
\begin{theorem}\label{curvature1}
Let $M$ be a 3-quasi-Sasakian manifold  of rank $4l+3$. Then,
\begin{align*}
R_{XY}\phi_\alpha Z - \phi_\alpha R_{XY}Z & = \frac{c^2}{4}\bigl(\left(\Psi_\alpha(Y,\psi_\alpha Z)
-\eta_\alpha(Y)\eta_\alpha(Z)\right)\psi_\alpha X - \left(\Psi_\alpha(X,\psi_\alpha Z)\right.\\
&\quad \left. -\eta_\alpha(X)\eta_\alpha(Z)\right)\psi_\alpha Y -\Psi_\alpha(Y,Z)\psi_\alpha^2 X + \Psi_\alpha(X,Z)\psi_\alpha^2
Y\\
&\quad +\left(\eta_\alpha(X)\Psi_\alpha(Y,Z)-\eta_\alpha(Y)\Psi_\alpha(X,Z)\right)\xi_\alpha\bigr).
\end{align*}
\end{theorem}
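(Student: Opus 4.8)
The plan is to obtain the identity from the Ricci commutation formula for the $(1,1)$-tensor field $\psi_\alpha$, together with the expression \eqref{nablapsi} for $\nabla\psi_\alpha$. For any $(1,1)$-tensor $T$ one has $R_{XY}(TZ)-T(R_{XY}Z)=(\nabla^2_{X,Y}T)Z-(\nabla^2_{Y,X}T)Z$, where $(\nabla^2_{X,Y}T)Z=\nabla_X\bigl((\nabla_Y T)Z\bigr)-(\nabla_Y T)(\nabla_X Z)-(\nabla_{\nabla_X Y}T)Z$. I first replace $\phi_\alpha$ by $\psi_\alpha$ on the left: since $\phi_\alpha=\psi_\alpha+\theta_\alpha$, it suffices to know that $R_{XY}(\theta_\alpha Z)=\theta_\alpha(R_{XY}Z)$. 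This is trivial for $Z\in\Gamma(\mathcal E^{4l+3})$, on which $\theta_\alpha$ vanishes and which is $R$-invariant (being parallel); for $Z\in\Gamma(\mathcal E^{4m})$ it holds because the leaves of the parallel, totally geodesic foliation $\mathcal E^{4m}$ carry a hyper-Kähler structure — the $\Theta_\alpha$ being closed $2$-forms — so that on $\mathcal E^{4m}$ the curvature of $M$ coincides with that of the leaf and commutes with $\phi_\alpha|_{\mathcal E^{4m}}=\theta_\alpha|_{\mathcal E^{4m}}$. Hence $R_{XY}\phi_\alpha Z-\phi_\alpha R_{XY}Z=(\nabla^2_{X,Y}\psi_\alpha-\nabla^2_{Y,X}\psi_\alpha)Z$.

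It then remains to compute $\nabla^2\psi_\alpha$. Substituting \eqref{nablapsi} into $(\nabla^2_{X,Y}\psi_\alpha)Z$ and differentiating, one uses \eqref{nablaphisquare} for $\nabla_X(\psi_\alpha^2 Y)$, the relations $\nabla_X\xi_\alpha=-\frac{c}{2}\psi_\alpha X$ and $(\nabla_X\eta_\alpha)Z=\frac{c}{2}\Psi_\alpha(X,Z)$ coming from \eqref{differential1}, and the skew-adjointness of $\psi_\alpha$ (valid since $c\Psi_\alpha=d\eta_\alpha$ is a $2$-form), which together with \eqref{simmetry} also gives $g(\psi_\alpha^2 Y,Z)=\Psi_\alpha(Y,\psi_\alpha Z)$. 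All terms involving $\nabla_X Y$ or $\nabla_X Z$ must cancel — a useful consistency check — leaving
\begin{equation*}
(\nabla^2_{X,Y}\psi_\alpha)Z=\frac{c^2}{4}\bigl(\Psi_\alpha(X,Z)\psi_\alpha^2 Y+\Psi_\alpha(Y,\psi_\alpha Z)\psi_\alpha X-\eta_\alpha(Y)\eta_\alpha(Z)\psi_\alpha X-\eta_\alpha(Y)\Psi_\alpha(X,Z)\xi_\alpha\bigr).
\end{equation*}
Subtracting from this the same expression with $X$ and $Y$ interchanged and grouping the eight resulting terms by the coefficients of $\psi_\alpha X$, $\psi_\alpha Y$, $\psi_\alpha^2 X$, $\psi_\alpha^2 Y$ and $\xi_\alpha$ reproduces exactly the right-hand side of the statement.

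The main obstacle is the bookkeeping in the middle step: expanding $\nabla_X\bigl((\nabla_Y\psi_\alpha)Z\bigr)$ produces about a dozen terms, and one must keep straight the signs stemming from the skew-adjointness of $\psi_\alpha$ and from the sign conventions for $d\eta_\alpha$ and for $R$, as well as verify the cancellation of the non-tensorial terms. A secondary, more conceptual point is the reduction in the first paragraph, which relies on the hyper-Kähler nature of the leaves of $\mathcal E^{4m}$; alternatively, one can use that $\nabla\phi_\alpha$ obeys the same formula \eqref{nablapsi} as $\nabla\psi_\alpha$ (cf. \cite{ijm09}) and apply the Ricci identity to $\phi_\alpha$ directly, in which case the computation above is carried out verbatim with $\phi_\alpha$ in place of $\psi_\alpha$.
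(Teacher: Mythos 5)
Your proposal is essentially the paper's proof: the paper's argument is precisely the ``long computation'' you carry out, namely the Ricci identity for the structure tensor combined with \eqref{nablapsi}, \eqref{nablaphisquare} and \eqref{psicube}, and your expression for $(\nabla^2_{X,Y}\psi_\alpha)Z$ and its alternation in $X,Y$ do reproduce the stated right-hand side (the bookkeeping checks out, including the cancellation of the non-tensorial terms). The one soft spot is your first-paragraph reduction from $\phi_\alpha$ to $\psi_\alpha$: the claim $R_{XY}\theta_\alpha Z=\theta_\alpha R_{XY}Z$ for $Z\in\Gamma(\mathcal{E}^{4m})$ and \emph{arbitrary} $X,Y$ does not follow from ``the curvature of $M$ coincides with that of the leaf,'' because total geodesicity of the leaves identifies $R^M$ with the leaf curvature only when $X,Y$ are themselves tangent to $\mathcal{E}^{4m}$; for mixed arguments you would need the local de~Rham splitting coming from the fact that both $\mathcal{E}^{4l+3}$ and $\mathcal{E}^{4m}$ are parallel, or, more directly, the observation that $\nabla\theta_\alpha=0$, which is exactly \cite[(4.9)]{ijm09} combined with \eqref{nablapsi}. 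Your closing alternative --- applying the Ricci identity to $\phi_\alpha$ itself, since $\nabla\phi_\alpha$ obeys the same formula as $\nabla\psi_\alpha$ by \cite[(4.9)]{ijm09} --- is the clean way to handle this and is presumably what the authors intend; with that substitution your argument is complete and coincides with the paper's.
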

\begin{proof}
The claim follows from a long computation using \eqref{nablapsi},
\eqref{nablaphisquare} and \eqref{psicube}.
\end{proof}

\begin{corollary}\label{curvature3}
In any 3-quasi-Sasakian manifold  of rank $4l+3$  one has %the following relation holds, for any $X,Y,Z,W\in\Gamma(TM)$,
\begin{equation*}
g(R_{XY}\phi_\alpha Z,W)+g(R_{XY}Z,\phi_\alpha W)=-P_{\alpha}(X,Y,Z,W),
\end{equation*}
where $P_\alpha$ is the tensor defined by
\begin{align*}
P_{\alpha}(X,Y,Z,W)&=\frac{c^2}{4}\bigl(\Psi_\alpha(Y,Z)\Psi_\alpha(X,\psi_\alpha W)-\Psi_\alpha(X,Z)\Psi_\alpha(Y,\psi_\alpha W)\\
&\quad+\Psi_\alpha(Y,\psi_\alpha Z)\Psi_\alpha(X,W)-\Psi_\alpha(X,\psi_\alpha Z)\Psi_\alpha(Y,W)\\
&\quad-\eta_\alpha(X)\eta_\alpha(W)\Psi_\alpha(Y,Z)-\eta_\alpha(Y)\eta_\alpha(Z)\Psi_\alpha(X,W)\\
&\quad+\eta_\alpha(Y)\eta_\alpha(W)\Psi_\alpha(X,Z)+\eta_\alpha(X)\eta_\alpha(Z)\Psi_\alpha(Y,W)\bigr).
\end{align*}
\end{corollary}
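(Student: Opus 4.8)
The plan is to obtain this as an immediate consequence of Theorem~\ref{curvature1}. First I would note that, since $\Phi_\alpha=g(\cdot,\phi_\alpha\cdot)$ is a $2$-form, the endomorphism $\phi_\alpha$ is skew-symmetric with respect to $g$; hence
\begin{equation*}
g(R_{XY}\phi_\alpha Z,W)+g(R_{XY}Z,\phi_\alpha W)=g(R_{XY}\phi_\alpha Z,W)-g(\phi_\alpha R_{XY}Z,W)=g\bigl(R_{XY}\phi_\alpha Z-\phi_\alpha R_{XY}Z,W\bigr).
\end{equation*}
So it suffices to take the $g$-inner product of the formula in Theorem~\ref{curvature1} with $W$.

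Second, I would record the three contraction identities that are needed. Because $\Psi_\alpha$ is a (closed) $2$-form, $\psi_\alpha$ is $g$-skew-symmetric, so $g(\psi_\alpha X,W)=-\Psi_\alpha(X,W)$; by \eqref{simmetry} the operator $\psi_\alpha^2$ is $g$-symmetric, so $g(\psi_\alpha^2 X,W)=g(X,\psi_\alpha^2 W)=\Psi_\alpha(X,\psi_\alpha W)$; and trivially $g(\xi_\alpha,W)=\eta_\alpha(W)$. Substituting these into the right-hand side of Theorem~\ref{curvature1}, the five summands become, up to the common factor $\tfrac{c^2}{4}$: the term $-(\Psi_\alpha(Y,\psi_\alpha Z)-\eta_\alpha(Y)\eta_\alpha(Z))\Psi_\alpha(X,W)$, the term $(\Psi_\alpha(X,\psi_\alpha Z)-\eta_\alpha(X)\eta_\alpha(Z))\Psi_\alpha(Y,W)$, then $-\Psi_\alpha(Y,Z)\Psi_\alpha(X,\psi_\alpha W)$, then $\Psi_\alpha(X,Z)\Psi_\alpha(Y,\psi_\alpha W)$, and finally $(\eta_\alpha(X)\Psi_\alpha(Y,Z)-\eta_\alpha(Y)\Psi_\alpha(X,Z))\eta_\alpha(W)$. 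Expanding the first two brackets and matching the resulting eight monomials against the definition of $P_\alpha$ gives exactly $-P_\alpha(X,Y,Z,W)$.

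I do not expect a genuine obstacle here: the whole argument is bookkeeping of signs, the only delicate point being the repeated use of the antisymmetry $\Psi_\alpha(W,X)=-\Psi_\alpha(X,W)$ to bring the contracted slot of each $\Psi_\alpha$-factor into the order in which it appears in $P_\alpha$. The one convention that must be pinned down first is that $\phi_\alpha$ (and hence $\psi_\alpha$) is $g$-skew-symmetric rather than self-adjoint --- this is forced by $\Phi_\alpha$ and $\Psi_\alpha$ being honest $2$-forms --- after which all eight terms coincide exactly.
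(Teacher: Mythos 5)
Your proposal is correct and is exactly the route the paper intends (the corollary is left as an immediate consequence of Theorem \ref{curvature1}): contracting with $W$, using the $g$-skew-symmetry of $\phi_\alpha$ and $\psi_\alpha$, the symmetry \eqref{simmetry} of $\psi_\alpha^2$, and $g(\xi_\alpha,W)=\eta_\alpha(W)$, the eight monomials match $-P_\alpha(X,Y,Z,W)$ term by term.
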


\begin{corollary}\label{curvature4}
In any 3-quasi-Sasakian manifold of rank $4l+3$ one has
\begin{align*}
g(R_{\phi_\alpha X \phi_\alpha Y}\phi_\alpha Z,\phi_\alpha W)&=\frac{c^2}{4}\bigl(g(R_{X Y}Z,W) + \Psi_\alpha(Z,X)\Psi_\alpha(W,\psi_\alpha\phi_\alpha Y)\\
&\quad +\Psi_\alpha(Z,\psi_\alpha X)\Psi_\alpha(W,\phi_\alpha Y) \\
&\quad + \Psi_\alpha(\phi_\alpha X,Z)\Psi_\alpha(\phi_\alpha Y,\psi_\alpha\phi_\alpha
W)\\
&\quad +\Psi_\alpha(\phi_\alpha X,\psi_\alpha Z)\Psi_\alpha(\phi_\alpha Y,\phi_\alpha W)\bigr),
\end{align*}
for any $X,Y,Z,W\in\Gamma({\mathcal H})$.
\end{corollary}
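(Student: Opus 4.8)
The plan is to remove the four factors $\phi_\alpha$ from $g(R_{\phi_\alpha X\phi_\alpha Y}\phi_\alpha Z,\phi_\alpha W)$ two at a time, by applying Corollary~\ref{curvature3} twice and using the symmetries of the Riemannian curvature tensor. Throughout, the ambient manifold has rank $4l+3$ (so Corollary~\ref{curvature3} applies) and $X,Y,Z,W\in\Gamma(\mathcal{H})$, so that $\phi_\alpha^2 Y=-Y$, $\phi_\alpha^2 W=-W$, while $\eta_\alpha$ annihilates every vector that will appear below (each is horizontal or of the form $\phi_\alpha(\cdot)$, and $\eta_\alpha\circ\phi_\alpha=0$); hence all $\eta_\alpha$-terms inside $P_\alpha$ will vanish. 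I will also use \eqref{simmetry}, \eqref{psicube}, and the identities $\psi_\alpha\phi_\alpha=\phi_\alpha\psi_\alpha=\psi_\alpha^2$ on $\mathcal{H}$, which follow from the definitions of $\psi_\alpha,\theta_\alpha$ (recall that $\mathcal{H}$ and the subdistributions $\mathcal{E}^{4l}$, $\mathcal{E}^{4m}$ are $\phi_\alpha$-invariant).

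First I would apply Corollary~\ref{curvature3} with $(X,Y,Z,W)$ replaced by $(\phi_\alpha X,\phi_\alpha Y,Z,\phi_\alpha W)$; its second summand then equals $g(R_{\phi_\alpha X\phi_\alpha Y}Z,\phi_\alpha^2 W)=-g(R_{\phi_\alpha X\phi_\alpha Y}Z,W)$, so that
\[
g(R_{\phi_\alpha X\phi_\alpha Y}\phi_\alpha Z,\phi_\alpha W)=g(R_{\phi_\alpha X\phi_\alpha Y}Z,W)-P_\alpha(\phi_\alpha X,\phi_\alpha Y,Z,\phi_\alpha W).
\]
Next, using the pair symmetry $g(R_{AB}C,D)=g(R_{CD}A,B)$ I would rewrite $g(R_{\phi_\alpha X\phi_\alpha Y}Z,W)=g(R_{ZW}\phi_\alpha X,\phi_\alpha Y)$ and apply Corollary~\ref{curvature3} once more, now with $(X,Y,Z,W)$ replaced by $(Z,W,X,\phi_\alpha Y)$, which gives $g(R_{ZW}\phi_\alpha X,\phi_\alpha Y)=g(R_{XY}Z,W)-P_\alpha(Z,W,X,\phi_\alpha Y)$. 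Combining the two steps,
\[
g(R_{\phi_\alpha X\phi_\alpha Y}\phi_\alpha Z,\phi_\alpha W)=g(R_{XY}Z,W)-P_\alpha(Z,W,X,\phi_\alpha Y)-P_\alpha(\phi_\alpha X,\phi_\alpha Y,Z,\phi_\alpha W).
\]

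It then remains to expand the two copies of $P_\alpha$ and simplify. Since the $\eta_\alpha$-terms drop, each $P_\alpha$ contributes four products of the form $\frac{c^2}{4}\Psi_\alpha(\cdot,\cdot)\Psi_\alpha(\cdot,\cdot)$. I expect four of the resulting eight products to coincide, up to sign, with the four $\Psi_\alpha$-terms displayed in the statement, while the remaining four cancel in pairs. To see the cancellation one uses the skew-symmetry of $\Psi_\alpha$, the symmetry $g(\psi_\alpha^2\cdot,\cdot)=g(\cdot,\psi_\alpha^2\cdot)$ of \eqref{simmetry}, and \eqref{psicube} together with $\psi_\alpha\phi_\alpha=\psi_\alpha^2$; these yield, for $A,B\in\mathcal{H}$, relations such as $\Psi_\alpha(A,\psi_\alpha\phi_\alpha B)=-\Psi_\alpha(A,B)$, $\Psi_\alpha(\phi_\alpha A,\phi_\alpha B)=\Psi_\alpha(A,B)$ and $\Psi_\alpha(A,\psi_\alpha B)=\Psi_\alpha(A,\phi_\alpha B)=g(\psi_\alpha^2 A,B)$, and with these the surplus contributions of $P_\alpha(Z,W,X,\phi_\alpha Y)$ and $P_\alpha(\phi_\alpha X,\phi_\alpha Y,Z,\phi_\alpha W)$ sum to zero, leaving the claimed identity. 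The delicate step is exactly this final one — pairing the surplus terms and keeping the signs straight; the two applications of Corollary~\ref{curvature3} that precede it are entirely mechanical.
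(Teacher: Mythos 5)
Your proposal is correct and follows essentially the same route as the paper: two applications of Corollary \ref{curvature3} (combined with the pair symmetry of $R$) give $g(R_{\phi_\alpha X\phi_\alpha Y}\phi_\alpha Z,\phi_\alpha W)=g(R_{XY}Z,W)-P_\alpha(Z,W,X,\phi_\alpha Y)-P_\alpha(\phi_\alpha X,\phi_\alpha Y,Z,\phi_\alpha W)$, and the surplus $\Psi_\alpha$-products do cancel in pairs via \eqref{simmetry}, the skew-symmetry of $\Psi_\alpha$, \eqref{psicube} and $\psi_\alpha\phi_\alpha=\phi_\alpha\psi_\alpha=\psi_\alpha^2$, exactly as in the paper's evaluation of $P_\alpha(Z,W,X,\phi_\alpha Y)+P_\alpha(\phi_\alpha X,\phi_\alpha Y,Z,\phi_\alpha W)$. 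Note that your computation (like the paper's own proof) produces the identity with $g(R_{XY}Z,W)$ appearing \emph{outside} the factor $\frac{c^2}{4}$, so the placement of that factor in the displayed statement of the corollary is an inconsistency of the paper's statement, not a flaw in your argument.
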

\begin{proof}
By using Corollary \ref{curvature3} twice, one obtains
\begin{align*}
g(R_{\phi_\alpha X \phi_\alpha Y}\phi_\alpha Z,\phi_\alpha W)=g(R_{X Y}Z,W)-P_\alpha( Z,W,X,\phi_\alpha Y)
- P_\alpha(\phi_\alpha X,\phi_\alpha Y,Z,\phi_\alpha W).
\end{align*}
Next, by using \eqref{simmetry} and the property that $\phi_\alpha$
and $\psi_\alpha$ commute, we get that
\begin{align*}
P_\alpha(Z,W,X,\phi_\alpha Y) + P_\alpha(\phi_\alpha X,\phi_\alpha Y,Z,\phi_\alpha W) &=-\frac{c^2}{4}\bigl(
\Psi_\alpha(Z,X)\Psi_\alpha(W,\psi_\alpha\phi_\alpha Y)\\
&\quad +\Psi_\alpha(Z,\psi_\alpha X)\Psi_\alpha(W,\psi_\alpha Y)\\
&\quad +\Psi_\alpha(\phi_\alpha X,Z)\Psi_\alpha(\phi_\alpha Y,\psi_\alpha\phi_\alpha W)\\
&\quad +\Psi_\alpha(\phi_\alpha X,\psi_\alpha Z)\Psi_\alpha(\phi_\alpha Y,\phi_\alpha W)\bigr).
\end{align*}
Thus the assertion follows.
\end{proof}

We recall that on an almost contact metric manifold
$(M,\phi,\xi,\eta,g)$ one defines a \emph{$\phi$-section} as the
$2$-plane spanned by $X$ and $\phi X$, where $X$ is a unit vector
field orthogonal to $\xi$. Then the sectional curvature
$H(X):=K(X,\phi X)=g(R_{X \phi X}\phi X,X)$ is called
\emph{$\phi$-sectional curvature}. In a 3-quasi-Sasakian manifold
$M$, we denote by $H_\alpha$ the $\phi_\alpha$-sectional curvature.
\begin{theorem}\label{curvature2}
For any $X\in\Gamma({\mathcal H})$ the $\phi_\alpha$-sectional curvatures of a 3-quasi-Sasakian manifold  of rank $4l+3$  satisfy the following relation
\begin{equation}\label{sectional}
H_{1}(X) + H_{2}(X) + H_{3}(X) = \frac{3c^2}{4}g(X_{{\mathcal
E}^{4l}},X_{{\mathcal E}^{4l}})^2,
\end{equation}\label{sectional1}
where $X_{{\mathcal E}^{4l}}$ denotes the projection of $X$  onto the distribution ${\mathcal E}^{4l}$. In particular,
\begin{equation}\label{sectional3qS}
H_{1}(X) + H_{2}(X) + H_{3}(X) =  \left\{
                                    \begin{array}{ll}
                                      \frac{3c^2}{4}, & \hbox{for any $X\in\Gamma({\mathcal E}^{4l})$;} \\
                                      0, & \hbox{for any $X\in\Gamma({\mathcal E}^{4m})$.}
                                    \end{array}
                                  \right.
\end{equation}
\end{theorem}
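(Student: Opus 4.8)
The plan is to deduce Theorem~\ref{curvature2} from Corollary~\ref{curvature3}, the first Bianchi identity, and a direct evaluation of the tensor $P_\alpha$. Since the manifold has rank $4l+3$, the tensors $\psi_\alpha$ are defined and $\mathcal H=\mathcal E^{4l}\oplus\mathcal E^{4m}$ orthogonally; I would use that $\mathcal E^{4l}$, $\mathcal E^{4m}$ and $\mathcal H$ are each $\phi_\alpha$-invariant, so that $\psi_\alpha W=\phi_\alpha W_{\mathcal E^{4l}}$ and $\psi_\alpha^2 W=-W_{\mathcal E^{4l}}$ for every $W\in\Gamma(\mathcal H)$. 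Fixing $X\in\Gamma(\mathcal H)$ and writing $u:=X_{\mathcal E^{4l}}$, this gives $\psi_\alpha X=\phi_\alpha u$ and $\psi_\alpha^2 X=-u$; moreover, since $\eta_\delta$ vanishes on $\mathcal H$, \eqref{3-structure} yields $\phi_\alpha\phi_\beta X=\phi_\gamma X$ on $\mathcal H$ and hence $\psi_\alpha\phi_\beta X=\phi_\gamma u$, for every even permutation $(\alpha,\beta,\gamma)$ of $\{1,2,3\}$. Throughout, $H_\gamma(X)=g(R_{X\phi_\gamma X}\phi_\gamma X,X)$.

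First I would specialise Corollary~\ref{curvature3} to $(X,Y,Z,W)=(X,\phi_\gamma X,\phi_\beta X,X)$. Since $\phi_\alpha(\phi_\beta X)=\phi_\gamma X$ this becomes
\[
H_\gamma(X)+g(R_{X\phi_\gamma X}\phi_\beta X,\phi_\alpha X)=-P_\alpha(X,\phi_\gamma X,\phi_\beta X,X).
\]
Summing over the three even permutations $(\alpha,\beta,\gamma)$ of $\{1,2,3\}$ (so $\gamma$ runs over $\{1,2,3\}$), the first terms add to $H_1(X)+H_2(X)+H_3(X)$, while the middle terms add to $g(R_{X\phi_3X}\phi_2X,\phi_1X)+g(R_{X\phi_1X}\phi_3X,\phi_2X)+g(R_{X\phi_2X}\phi_1X,\phi_3X)$, which vanishes by the antisymmetry of $R$ in its last two slots combined with the first Bianchi identity and the pair symmetry of $R$. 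Therefore
\[
H_1(X)+H_2(X)+H_3(X)=-\sum_{(\alpha,\beta,\gamma)}P_\alpha(X,\phi_\gamma X,\phi_\beta X,X),
\]
the sum being over even permutations.

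Next I would evaluate $P_\alpha(X,\phi_\gamma X,\phi_\beta X,X)$ from the formula in Corollary~\ref{curvature3}. Because $X,\phi_\beta X,\phi_\gamma X$ are horizontal, all the $\eta_\alpha$-terms disappear. Using that $\mathcal E^{4l}\perp\mathcal E^{4m}$ and that each $\phi_\delta$ is skew with respect to $g$, one checks $\Psi_\alpha(X,X)=g(X,\phi_\alpha u)=0$, $\Psi_\alpha(X,\phi_\beta X)=g(X,\phi_\gamma u)=0$, and $\Psi_\alpha(X,\psi_\alpha\phi_\beta X)=g(X,\psi_\alpha^2\phi_\beta X)=-g(X,\phi_\beta u)=0$, which kills three of the four remaining products in $P_\alpha$. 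The surviving one is $\tfrac{c^2}{4}\,\Psi_\alpha(\phi_\gamma X,\phi_\beta X)\,\Psi_\alpha(X,\psi_\alpha X)$, where $\Psi_\alpha(\phi_\gamma X,\phi_\beta X)=g(\phi_\gamma X,\psi_\alpha\phi_\beta X)=g(\phi_\gamma X,\phi_\gamma u)=g(X,u)=g(u,u)$ and $\Psi_\alpha(X,\psi_\alpha X)=g(X,\psi_\alpha^2 X)=-g(u,u)$. Hence $P_\alpha(X,\phi_\gamma X,\phi_\beta X,X)=-\tfrac{c^2}{4}g(u,u)^2$ for every $\alpha$, and substituting into the previous display yields \eqref{sectional}. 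Finally, \eqref{sectional3qS} follows by taking $X$ to be a unit vector in $\mathcal E^{4l}$ (so $u=X$) or $X\in\Gamma(\mathcal E^{4m})$ (so $u=0$).

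I expect the evaluation of $P_\alpha$ to be the only delicate step: one must correctly identify which of its eight terms survive after the substitution $(X,\phi_\gamma X,\phi_\beta X,X)$ and then compute the surviving $\Psi_\alpha$-pairing, which relies on the $\phi_\alpha$-invariance of $\mathcal E^{4l}$, the relations $\phi_\alpha\phi_\beta X=\phi_\gamma X$ on $\mathcal H$, and $\psi_\alpha^2 X=-X_{\mathcal E^{4l}}$. By contrast, the vanishing of the curvature cross-terms via the Bianchi identity is immediate once the right arguments have been substituted into Corollary~\ref{curvature3}.
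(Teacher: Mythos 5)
Your proof is correct and takes essentially the same route as the paper: both specialize the identity of Corollary \ref{curvature3} (the paper via its rearranged form \eqref{intermediate1}) to arguments built from $X,\phi_1X,\phi_2X,\phi_3X$, sum over the cyclic permutations, kill the mixed curvature terms $g(R_{X\phi_\gamma X}\phi_\beta X,\phi_\alpha X)$ with the first Bianchi identity and the symmetries of $R$, and evaluate the surviving $\Psi_\alpha$-products using the $\phi_\alpha$-invariance of ${\mathcal E}^{4l}$ and ${\mathcal E}^{4m}$, obtaining $-\tfrac{c^2}{4}g(X_{{\mathcal E}^{4l}},X_{{\mathcal E}^{4l}})^2$ from each permutation. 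The only cosmetic difference is the exact substitution (you plug $(X,\phi_\gamma X,\phi_\beta X,X)$ directly into Corollary \ref{curvature3}, the paper takes $\alpha=1$, $Z=X$, $Y=W=\phi_3X$ in \eqref{intermediate1}), which yields the same three relations.
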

\begin{proof}
From Corollary \ref{curvature3} it follows that, for any
$X,Y,Z,W\in\Gamma(\mathcal{H})$,
\begin{align}
g(R_{XY}\phi_\alpha Z,\phi_\alpha W)&=g(R_{XY}Z, W) + \frac{c^2}{4}\bigl(\Psi_\alpha(Y,\psi_\alpha Z)g(\psi_\alpha X,\phi_\alpha W)  \nonumber \\
&\quad-\Psi_\alpha(X,\psi_\alpha Z)g(\psi_\alpha Y,\phi_\alpha W)-\Psi_\alpha(Y,Z)g(\psi_\alpha^2 X,\phi_\alpha W) \label{intermediate1}\\
 &\quad +\Psi_\alpha(X,Z)g(\psi_\alpha^2
Y,\phi_\alpha W)\bigr). \nonumber
\end{align}
In \eqref{intermediate1} we put  $\alpha=1$, $Z=X$ and $Y=W=\phi_3 X$, getting
\begin{align*}
-g(R_{X \phi_3 X}\phi_1 X,\phi_2 X) & = g(R_{X \phi_3 X}X,\phi_3 X) + \frac{c^2}{4}\bigl(-g(\phi_3 X,\psi_1^2 X)g(\psi_1 X,\phi_2 X)\\
 &\quad + g(X,\psi_1^2
X)g(\psi_1 \phi_3 X,\phi_2 X) + g(\phi_3 X, \psi_1 X) g(\psi_1^2 X,\phi_2 X)\\
 &\quad  - g(X,\psi_1 X)g(\psi_1^2 \phi_3 X,\phi_2 X)\bigr).
\end{align*}
By using the definition of the operators $\psi_\alpha$ and the
property that
$g(\phi_\alpha\cdot,\cdot)=-g(\cdot,\phi_\alpha\cdot)$, one proves
that $g(\psi_1 X,\phi_2 X)$, $g(\phi_3 X,\psi_1 X)$, and $g(X,\psi_1
X)$ vanish. Hence the previous relation becomes
\begin{align}\label{intermediate2}
-g(R_{X \phi_3 X}\phi_1 X,\phi_2 X) & = g(R_{X \phi_3 X}X,\phi_3 X) + \frac{c^2}{4}  g(X,\psi_1^2 X)g(\psi_1 \phi_3 X,\phi_2 X) \nonumber\\
&=-H_{3}(X) + \frac{c^2}{4} g(X_{{\mathcal E}^{4l}},X_{{\mathcal E}^{4l}})^2
\end{align}
since $g(X,\psi_{1}^2 X)g(\psi_1 \phi_3 X,\phi_2
X)=-g(X,\phi_{1}^{2}X_{{\mathcal E}^{4l}})g(\phi_{2}X_{{\mathcal
E}^{4l}},\phi_{2}X)=g(X_{{\mathcal E}^{4l}},X)^2=g(X_{{\mathcal
E}^{4l}},X_{{\mathcal E}^{4l}})^2$. Making cyclic permutations of
$\{1,2,3\}$, one gets
\begin{gather}
-g(R_{X \phi_{1}X}\phi_{2}X,\phi_{3}X)=-H_{1}(X) + \frac{c^2}{4}g(X_{{\mathcal E}^{4l}},X_{{\mathcal E}^{4l}})^2 \label{intermediate3}\\
-g(R_{X \phi_{2}X}
\phi_{3}X,\phi_{1}X)=-H_{2}(X) + \frac{c^2}{4}g(X_{{\mathcal E}^{4l}},X_{{\mathcal E}^{4l}})^2. \label{intermediate4}
\end{gather}
Then by summing  \eqref{intermediate2}, \eqref{intermediate3},
\eqref{intermediate4}, the claim follows from the Bianchi identity.
\end{proof}

The notion of \emph{horizontal sectional curvature}
(\cite{konishi76}) plays in the context of $3$-structures the same
role played by the $\phi$-sectional curvature in  contact metric
geometry.  Let $(\phi_\alpha,\xi_\alpha,\eta_\alpha,g)$ be an almost
contact $3$-structure on $M$. Let $X$ be a horizontal vector at a
point $x$. Then one can consider the $4$-dimensional subspace
${\mathcal H}_x(X)$ of $T_{x}M$ defined by ${\mathcal
H}_x(X)=\left\langle X , \phi_1 X, \phi_2 X, \phi_3 X
\right\rangle$. ${\mathcal H}_x(X)$ is called the \emph{horizontal
section} determined by $X$. If the sectional curvature for any two
vectors belonging to  ${\mathcal H}_x(X)$ is a constant $k(X)$
depending only upon the fixed horizontal vector $X$ at $x$, then
$k(X)$ is said to be the horizontal sectional curvature with respect
to $X$ at $x$. Now let $X$ be an arbitrary horizontal vector field
on $M$. If the horizontal section ${\mathcal H}_x(X)$ at any point
$x$ of $M$ has a horizontal sectional curvature whose value $k(X)$
is independent of $X$, we say that the manifold $M$ is of
\emph{constant horizontal sectional curvature} at $x$. It is known
(\cite{konishi76}) that a 3-Sasakian manifold has constant
horizontal sectional curvature if and only if it has constant
curvature $1$. We now consider the 3-quasi-Sasakian setting.

\begin{theorem}
A 3-quasi-Sasakian manifold has constant horizontal sectional curvature
if and only if it is either 3-$c$-Sasakian or 3-cosymplectic. In the first case it
is a space of constant curvature $c^2/4$, in the latter it is flat.
\end{theorem}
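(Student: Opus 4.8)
The plan is to reduce the statement to the two structural alternatives of Theorem~\ref{principale} (rank $1$, i.e.\ $3$-cosymplectic, versus rank $4n+3$, i.e.\ $3$-$c$-Sasakian) and then identify the curvature in each case. The key input is Theorem~\ref{curvature2}: for any unit horizontal vector $X$ one has
\begin{equation*}
H_1(X)+H_2(X)+H_3(X)=\frac{3c^2}{4}\,g(X_{{\mathcal E}^{4l}},X_{{\mathcal E}^{4l}})^2 .
\end{equation*}
If $M$ has constant horizontal sectional curvature $k(X)=k$, then in particular each $\phi_\alpha$-sectional curvature equals $k$ (since $X$ and $\phi_\alpha X$ span a $2$-plane inside the horizontal section ${\mathcal H}_x(X)$), so the left-hand side is $3k$. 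Hence $k=\frac{c^2}{4}\,g(X_{{\mathcal E}^{4l}},X_{{\mathcal E}^{4l}})^2$ for \emph{every} unit $X\in\Gamma({\mathcal H})$. Since $k$ is independent of $X$, the function $X\mapsto g(X_{{\mathcal E}^{4l}},X_{{\mathcal E}^{4l}})$ must be constant on unit horizontal vectors. Because both ${\mathcal E}^{4l}$ and ${\mathcal E}^{4m}$ are nontrivial whenever $0<l<n$, one can pick a unit $X\in{\mathcal E}^{4m}$ (giving the value $0$) and a unit $X\in{\mathcal E}^{4l}$ (giving the value $1$), a contradiction unless $l=0$ or $l=n$; equivalently ${\mathcal H}={\mathcal E}^{4m}$ (so $c=0$, $M$ is $3$-cosymplectic) or ${\mathcal H}={\mathcal E}^{4l}$ (rank $4n+3$, so $M$ is $3$-$c$-Sasakian by Theorem~\ref{principale}).

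In the $3$-cosymplectic case ($c=0$), formula \eqref{sectional} gives $H_1(X)+H_2(X)+H_3(X)=0$, hence $3k=0$, so $k=0$: every horizontal sectional curvature vanishes. It remains to upgrade this to flatness of all of $M$. For this I would use that $c=0$ forces each $\xi_\alpha$ to be parallel (as already noted in the proof of Theorem~\ref{curvature5}), so $R_{XY}\xi_\alpha=0$ for all $X,Y$; combined with the fact that the Reeb distribution ${\mathcal V}$ is spanned by parallel vector fields, the curvature tensor is determined by its restriction to ${\mathcal H}$, and $K\equiv 0$ on horizontal sections together with the symmetries of $R$ yields $R\equiv 0$ on ${\mathcal H}$. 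Since ${\mathcal V}$ is parallel and flat, $R\equiv 0$ on $M$.

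In the $3$-$c$-Sasakian case one has rank $4n+3$, so ${\mathcal E}^{4l}={\mathcal H}$, $\psi_\alpha=\phi_\alpha$, and \eqref{sectional} reads $H_1(X)+H_2(X)+H_3(X)=\frac{3c^2}{4}$ for every unit horizontal $X$; hence $k=\frac{c^2}{4}$. To conclude that $M$ has constant curvature $c^2/4$ (not merely constant horizontal sectional curvature), the strategy is to feed $k=c^2/4$ back into the curvature identities of Corollary~\ref{curvature3}, Corollary~\ref{curvature4} and Theorem~\ref{curvature1}. Specialising these (now with $\psi_\alpha=\phi_\alpha$) one expresses $g(R_{XY}Z,W)$ for horizontal $X,Y,Z,W$ in terms of the metric, the forms $\Phi_\alpha$, and the known horizontal sectional curvature; a standard polarisation argument (as in the proof of Konishi's theorem, or as in the analogous Sasakian space-form computation using \eqref{sasaki1}) then shows the horizontal part of $R$ coincides with that of the curvature tensor of constant curvature $c^2/4$. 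Finally Theorem~\ref{curvature5} gives $R_{XY}\xi_\alpha=\frac{c^2}{4}(\eta_\alpha(X)\phi_\alpha^2 Y-\eta_\alpha(Y)\phi_\alpha^2 X)$, which is exactly the mixed/vertical part of the constant-curvature tensor on a $3$-$c$-Sasakian manifold; together these force constant sectional curvature $c^2/4$ on all of $M$.

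\textbf{Main obstacle.} The routine but delicate part is the last paragraph: passing from ``constant horizontal sectional curvature'' to ``constant sectional curvature'' requires carefully combining the curvature formulas of the previous section and a polarisation/Bianchi argument to pin down $g(R_{XY}Z,W)$ on all horizontal $4$-tuples, and then checking that the vertical and mixed components (controlled by Theorem~\ref{curvature5}) match the constant-curvature model. The dichotomy step (identifying which of the two classes occurs) is comparatively easy once Theorem~\ref{curvature2} is available, since it is essentially the observation that $g(X_{{\mathcal E}^{4l}},X_{{\mathcal E}^{4l}})$ cannot be a nonconstant function taking both values $0$ and $1$ unless one of the distributions ${\mathcal E}^{4l}$, ${\mathcal E}^{4m}$ is trivial.
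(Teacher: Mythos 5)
Your dichotomy step is essentially the paper's: both arguments substitute the hypothesis $k=H_1(X)=H_2(X)=H_3(X)$ into \eqref{sectional3qS} and conclude that one of the distributions ${\mathcal E}^{4l}$, ${\mathcal E}^{4m}$ must be trivial. (One caveat: your identification ``${\mathcal H}={\mathcal E}^{4m}$, so $c=0$ and $M$ is $3$-cosymplectic'' is not a valid implication, since in the rank $4l+3$ situation $c\neq 0$ by Theorem~\ref{principale}; the paper instead splits from the outset into the $3$-cosymplectic case, where $c=0$ by definition, and the rank $4l+3$ case.) The genuine gaps lie in the two ``identify the curvature'' halves, which is exactly where the paper imports outside input. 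In the $3$-cosymplectic branch, your claim that ``$K\equiv 0$ on horizontal sections together with the symmetries of $R$ yields $R\equiv 0$ on ${\mathcal H}$'' does not follow: constant horizontal sectional curvature only controls $2$-planes contained in the special $4$-dimensional subspaces $\left\langle X,\phi_1X,\phi_2X,\phi_3X\right\rangle$, and no purely algebraic symmetry of $R$ upgrades this to vanishing on arbitrary horizontal $2$-planes. What is needed is the hyper-K\"{a}hler-type behaviour of the horizontal curvature and the nontrivial fact that a hyper-K\"{a}hler manifold of constant horizontal sectional curvature is flat; the paper obtains this by projecting along a local Riemannian submersion onto a hyper-K\"{a}hler base, using O'Neill's formula \eqref{oneillformula} (the $A$-tensor vanishes because ${\mathcal H}$ is integrable), quoting the hyper-K\"{a}hler flatness result, and then handling the mixed planes via $K(Z,\xi_\alpha)=0$ from \cite{goldberg}.

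In the $3$-$c$-Sasakian branch you correctly obtain $k=c^2/4$, but the passage from constant horizontal sectional curvature to constant sectional curvature is only announced (``a standard polarisation argument as in Konishi''); this is precisely the content of the Konishi--Funabashi theorem and is not a formal consequence of Theorem~\ref{curvature1} and Corollaries~\ref{curvature3} and \ref{curvature4}, each of which involves a single structure $\phi_\alpha$ at a time, whereas Konishi's argument genuinely exploits the interaction of the three structures. The paper does not redo that computation: it applies the homothety \eqref{deformation}, observes that the deformed structure is $3$-Sasakian with constant horizontal sectional curvature, invokes \cite{konishi76} to get constant curvature, and transfers the conclusion back to the original metric. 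So your outline reproduces the paper's skeleton, but the two steps you yourself flag as ``routine but delicate'' are exactly the ones the paper settles by the submersion/O'Neill argument and by the homothetic reduction to Konishi's theorem, and as written your proposal leaves them unproved.
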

\begin{proof}
We distinguish the case when $M$ is $3$-cosymplectic and $M$ is
$3$-quasi-Sasakian of rank $4l+3$. \ Let $M$ be a 3-cosymplectic
manifold of constant horizontal sectional curvature $k$ and let $x$
be a point of $M$. There exists a local Riemannian submersion $\pi$
defined on an open neighborhood of $x$ with base space a
hyper-K\"{a}hler manifold $(M',J'_\alpha,g')$. We recall the
well-known O'Neill formula (\cite{oneill66})  relating the sectional
curvatures of the total and  base spaces
\begin{equation}\label{oneillformula}
K(Y,Z)=K'(Y,Z) - 3\left\|A_{Y}Z\right\|=K'(Y,Z),
\end{equation}
$A$ denoting the O'Neill tensor, which in this case vanishes
identically since the distribution $\mathcal H$ is integrable.  As
the value of $k$ does not depend of the horizontal section
${\mathcal H}_{x}(X)$ at $x$, we can choose $X$ to be a basic vector
field. Since for any $\alpha,\beta\in\left\{1,2,3\right\}$,
${\mathcal L}_{\xi_\alpha}\phi_\beta=0$, ${\mathcal H}_{x}(X)$
projects to a horizontal section ${\mathcal H}_{x'}(X')$ on
$x'=\pi(x)$. Then, \eqref{oneillformula} implies that $M'$ has
constant horizontal sectional curvature $k$. It is well known that a
hyper-K\"{a}hler manifold of constant horizontal sectional curvature
is flat, hence by using \eqref{oneillformula} again we get that $M$
is horizontally flat. On the other hand, for any $Z\in\Gamma(TM)$,
we have $K(Z,\xi_\alpha)=0$ (cf. \cite[Lemma 2]{goldberg}). Thus $M$
is flat. Let us now suppose that $M$ is a 3-quasi-Sasakian manifold
of rank $4l+3$ with constant horizontal sectional curvature $k$. By
definition of horizontal sectional curvature,
$k=k(X)=H_{1}(X)=H_{2}(X)=H_{3}(X)$. Suppose the rank of $M$ is not
maximal, that is ${\mathcal E}^{4l}$ does not coincide with
$\mathcal H$. Then, from \eqref{sectional3qS}, we get that
$k(X)=\frac{c^2}{4}$ for $X\in\Gamma({\mathcal E}^{4l})$ and
$k(X)=0$ for $X\in\Gamma({\mathcal E}^{4m})$. This is in contrast
with the fact that the value of $k$ does not depend of $X$. Thus $M$
is necessarily of maximal rank and $k=\frac{c^2}{4}$. Hence, due to
\cite[Corollary 4.4]{ijm09}, $M$ is $3$-$c$-Sasakian. Observe now that one can apply a homothety to the
given structure, that is a change of the structure tensors of the
following type
\begin{equation}\label{deformation}
\bar\phi_\alpha:=\phi_\alpha, \quad \bar\xi_\alpha:=\frac{2}{c}\xi_\alpha, \quad
\bar\eta_\alpha:=\frac{c}2 \eta_\alpha, \quad \bar{g}:=\frac{c^2}{4} g,
\end{equation}
Then it is easy to check that the resulting structure $(\bar\phi_\alpha,\bar\xi_\alpha,\bar\eta_\alpha,\bar g)$ is $3$-Sasakian and its horizontal sectional curvature is proportional
to that of $(\phi_\alpha,\xi_\alpha,\eta_\alpha,g)$. Therefore, due
to \cite{konishi76},
$(M,\bar\phi_\alpha,\bar\xi_\alpha,\bar\eta_\alpha,\bar g)$ is a
space of constant sectional curvature and therefore the same is true
for $(M,\phi_\alpha,\xi_\alpha,\eta_\alpha,g)$. Its sectional
curvature is $k=\frac{c^2}{4}$.
\end{proof}

%\noindent{\bf Acknowledgments}
%
%\noindent Research partially supported by CMUC and FCT (Portugal), through
%European program COMPETE/FEDER, grants PTDC/MAT/099880/2008,
%MTM2009-13383 (A.D.N.), and SFRH/BPD/31788/2006 (I.Y.).


\begin{thebibliography}{99}
\bibitem{blairbook}D. E. Blair, \textit{Riemannian geometry of contact and symplectic manifolds},
  Progress in Mathematics, vol. 203, Birkh\"auser Boston Inc., Boston, MA,
  2010.
\bibitem{blair0}D. E. Blair, \textit{The theory of quasi-Sasakian
structures}, J. Diff. Geom. \textbf{1} (1967), 331--345.
\bibitem{agag08}B. Cappelletti Montano, A. De Nicola, G. Dileo, \textit{3-quasi-Sasakian manifolds}, Ann.
Glob. Anal. Geom. \textbf{33} (2008) 397–-409.
\bibitem{ijm09}B. Cappelletti Montano, A. De Nicola, G. Dileo, \textit{The geometry of 3-quasi-Sasakian manifolds}, Internat. J. Math. \textbf{20} (2009), 1081--1105.
\bibitem{goldberg}S. Goldberg, K. Yano, \textit{Integrability of almost cosymplectic manifolds}, Pacific J. Math. \textbf{31} (1969), 373--382.
\bibitem{kashiwada2}T. Kashiwada, F. Martin Cabrera, M. M. Tripathi, \textit{Non-existence of certain 3-structures},  Rocky Mountain J. Math.
\textbf{35} (2005), 1953--1979.
\bibitem{konishi76} M. Konishi, S. Funabashi, \textit{On Riemannian manifolds with Sasakian $3$-structure of constant horizontal sectional curvature},
Kodai Math. Sem. Rep. \textbf{27} (1976), 362--366.
\bibitem{kuo}Y. Y. Kuo, \textit{On almost 3-structure},
T\^{o}hoku Math. J. \textbf{22} (1970), 325--332.
\bibitem{janssens-vanecke81} D. Janssens, L. Vanhecke, \textit{Almost contact structures and curvature tensors}, Kodai Math. J. \textbf{4} (1981), 1—27.
\bibitem{olszak82} Z. Olszak, \textit{Curvature properties of quasi-Sasakian manifolds}, Tensor \textbf{38} (1982), 19--28.
\bibitem{oneill66}B. O'Neill, \textit{The fundamental equations of a submersion}, Mich. Math. J. \textbf{13} (1966), 459--469.
\bibitem{rustanov94}A. R. Rustanov, \textit{On the geometry of quasi-Sasakian
manifolds},  Uspekhi Mat. Nauk. \textbf{49} (1994), 221--222;
translation in \textit{Russian Math. Surveys} \textbf{49} (1994),
243--244.
\bibitem{tanno}S. Tanno, \textit{Quasi-Sasakian structures of rank
$2p+1$}, J. Diff. Geom. \textbf{5} (1971), 317--324.
\bibitem{udriste}C. Udriste, \textit{Structures presque coquaternioniennes}, Bull. Math. Soc. Sci. Math. R. S. Roumanie
\textbf{13} (1969), 487--507.
\end{thebibliography}
\end{document}